\documentclass[11pt]{article}

\usepackage{amssymb,amsmath}
\usepackage{enumerate} 
\usepackage{enumitem} 

\newtheorem{theorem}{Theorem}[section]
\newtheorem{lemma}[theorem]{Lemma}

\newenvironment{proof}[1][]%
{\noindent {\setcounter{equation}{0}\it Proof.
}{#1}{}}{\hfill$\Box$\vspace{2ex}}

\def\longbox#1{\parbox{0.85\textwidth}{#1}}

\def\Gbar{\overline{G}}

\begin{document}

\title{Coloring $(P_5,\mbox{bull})$-free graphs}

\author{%
Fr\'ed\'eric Maffray\thanks{CNRS, Laboratoire G-SCOP,
Univ.~Grenoble-Alpes, Grenoble, France. \newline  Partially supported by ANR
project STINT under reference ANR-13-BS02-0007.}}

\date{\today}

\maketitle

\begin{abstract}
We give a polynomial-time algorithm that computes the chromatic number
of any graph that contains no path on five vertices and no bull as an
induced subgraph (where the bull is the graph with five vertices
$a,b,c,d,e$ and edges $ab,bc,cd,be,ce$).

\noindent{\bf Keywords}: Chromatic number, bull-free, $P_5$-free, 
algorithm, polynomial time
\end{abstract}

\section{Introduction}

For any graph $G$ and integer $k$, a \emph{$k$-coloring} of $G$ is a
mapping $c:V(G)\rightarrow\{1,\ldots,k\}$ such that any two adjacent
vertices $u,v$ in $G$ satisfy $c(u)\neq c(v)$.  A graph is
\emph{$k$-colorable} if it admits a $k$-coloring.  The \emph{chromatic
number} $\chi(G)$ of a graph $G$ is the smallest integer $k$ such that
$G$ is $k$-colorable.  Determining whether a graph is $k$-colorable is
NP-complete for each fixed $k\ge 3$ \cite{GJS,Karp}.

For any integer $\ell$ let $P_\ell$ denote the path on $\ell$ vertices
and $C_\ell$ denote the cycle on $\ell$ vertices.  The \emph{bull} is
the graph with five vertices $a,b,c,d,e$ and edges $ab,bc,cd,be,ce$.
Given a family of graphs ${\cal F}$, a graph $G$ is \emph{${\cal
F}$-free} if no induced subgraph of $G$ is isomorphic to a member of
${\cal F}$; when ${\cal F}$ has only one element $F$ we say that $G$
is $F$-free.  Coloring $P_5$-free graphs is an NP-complete problem, as
proved by Kr\'al' et al.~\cite{KKTW}.  On the other hand, Ho\`ang et
al.~\cite{HKLSS} proved that the problem of $k$-coloring $P_5$-free
graphs is polynomially solvable for every fixed $k$.  The complexity
of coloring ($P_5$,bull)-free graphs is mentioned as an open problem
in \cite{CamHoa} as well as on the Graph Classes website
(http://www.graphclasses.org).  Our main result is the following.

\begin{theorem}\label{thm:main}
There is a polynomial time algorithm that finds the chromatic number
of any $(P_5, \mbox{bull})$-free graph and gives a $\chi(G)$-coloring
of $G$.
\end{theorem}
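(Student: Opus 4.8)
The plan is to reduce, through a sequence of standard graph decompositions, to a short list of ``basic'' graphs for which $\chi$ is immediate, checking at each step that the decomposition behaves well both with respect to $\chi$ and with respect to the (hereditary) class of $(P_5,\mbox{bull})$-free graphs. First the routine reductions. Since $\chi$ of a graph is the largest $\chi$ among its components and $\chi(G_1+G_2)=\chi(G_1)+\chi(G_2)$ for a join, we may assume $G$ is connected and $\Gbar$ is connected, a $\chi$-coloring of $G$ being assembled trivially from those of the pieces. Next we use the clique-cutset decomposition: in polynomial time one computes the (polynomially many) pieces of $G$, each with no clique cutset; $\chi(G)$ is the maximum of $\chi$ over these pieces, and $\chi$-colorings of the pieces glue into a $\chi(G)$-coloring after permuting colors so that they agree on each separating clique. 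Finally, homogeneous sets: if $M$ is a nontrivial homogeneous set of $G$, then replacing $M$ by a clique on $\chi(G[M])$ vertices produces a graph $G'$ with no more vertices than $G$, with $\chi(G')=\chi(G)$ (any coloring of $G$ uses at least $\chi(G[M])$ colors on $M$, all distinct from those on the vertices complete to $M$, and conversely such colorings extend), and still $(P_5,\mbox{bull})$-free (substituting a clique into a vertex cannot create a new induced $P_5$ or bull, by a short check on how such a subgraph could meet the clique). Iterating these reductions, it remains to compute $\chi$ for graphs that are connected, have connected complement, have no clique cutset, and are prime --- together with the clique-blowups of such graphs that arise along the recursion.

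The heart of the argument is a structure theorem for these basic graphs. Here I would invoke the theorem of Bacs\'o and Tuza that every connected $P_5$-free graph has a dominating clique or a dominating $P_3$, and then use bull-freeness to pin down how the vertices outside the dominating set attach to it. In the dominating-clique case, for two non-adjacent vertices $u,v$ outside a dominating clique $K$ the neighborhoods $N(u)\cap K$ and $N(v)\cap K$ cannot be ``badly crossing'': otherwise, using that $K$ is a clique, one exhibits an induced $P_5$ or an induced bull. Pushing this, together with the no-clique-cutset and primeness hypotheses, should force that $G$ is perfect or that $G$ is an odd antihole $\overline{C_{2k+1}}$ (with $C_5=\overline{C_5}$ the smallest case), or --- before primeness is imposed --- a clique-blowup of such a graph; the dominating-$P_3$ case, together with co-connectedness, should likewise collapse to these configurations. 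As a sanity check: by the Strong Perfect Graph Theorem an imperfect graph in our class must contain $C_5$ or some $\overline{C_{2k+1}}$, and it cannot contain $C_7,C_9,\dots$, which already contain induced $P_5$'s, so the only odd holes or antiholes available are $C_5$ and the odd antiholes. I expect this case analysis --- in particular controlling the interplay between the dominating clique, the homogeneous sets, and the possible odd antiholes --- to be the main obstacle; it is essentially the whole combinatorial content of the paper.

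It then remains to compute $\chi$ for the basic graphs and for the clique-blowups occurring in the recursion. If a basic graph is perfect then $\chi=\omega$, both computable in polynomial time, and the same holds for any clique-blowup of it, since substituting cliques into a perfect graph yields a perfect graph. If a basic graph is an odd antihole $\overline{C_n}$ (including $C_5$), then a clique-blowup $\overline{C_n}[K_{a_1},\dots,K_{a_n}]$ is the complement of $C_n$ with stable sets substituted, so its chromatic number equals the minimum number of cliques needed to cover that graph --- a matching-type quantity on a cycle, computable in polynomial time --- and an optimal coloring is read off from an optimal clique cover. Since every decomposition step produces pieces that are strictly smaller, or are base cases (complete or edgeless graphs), the recursion terminates and runs in polynomial time, returning both $\chi(G)$ and a $\chi(G)$-coloring, which proves Theorem~\ref{thm:main}.
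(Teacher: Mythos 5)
Your routine reductions (components, joins, and replacing a homogeneous set $M$ by a clique on $\chi(G[M])$ vertices) are sound and correspond, in the complement, to steps (III)--(V) of the paper's algorithm. The genuine gap is exactly where you place ``the whole combinatorial content'': the claim that a prime, connected, co-connected, clique-cutset-free $(P_5,\mbox{bull})$-free graph must be perfect or an odd antihole (or, before primeness, a clique-blowup of one). You do not prove this, and it is false. Let $G$ be the complement of the Petersen graph, i.e., the line graph of $K_5$. The Petersen graph is triangle-free, hence contains no bull and no house, so $G$ is $(P_5,\mbox{bull})$-free; the Petersen graph is prime, so $G$ is prime; $G$ is connected, co-connected, $6$-regular with connectivity exceeding $\omega(G)=4$, so it has no clique cutset; yet $\chi(G)=\chi'(K_5)=5>4=\omega(G)$, so $G$ is imperfect, and it is not an odd antihole. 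Your SPGT sanity check only shows that an imperfect graph in the class \emph{contains} $C_5$ or an odd antihole, not that it \emph{is} a clique-blowup of one. Consequently your final step, which computes $\chi$ only for perfect graphs and for blowups of odd antiholes, does not cover all of the basic graphs your recursion produces.

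What is missing is the third outcome of the paper's structure theorem (Theorem~\ref{thm:hbf}, read in the complement): once all non-clique homogeneous sets have been contracted, a $(P_5,\mbox{bull})$-free graph is either $(C_5,P_5,\mbox{house})$-free --- these are handled by the greedy procedure of \cite{CHMW} and subsume your perfect case --- or it has independence number at most $2$, in which case $\chi(G)$ equals the minimum number of cliques covering the triangle-free graph $\overline{G}$ and is computed via maximum matching. The line graph of $K_5$ falls into this second class, and your framework has no mechanism for it; the Bacs\'o--Tuza dominating-clique route would have to rediscover and handle this $\alpha\le 2$ outcome before the proposal could be completed.
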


Let $G$ be a graph.  We say that a vertex $v$ is \emph{complete} to
$S$ if $v$ is adjacent to every vertex in $S$, and that $v$ is
\emph{anticomplete} to $S$ if $v$ has no neighbor in $S$.  For two
sets $S,T\subseteq V(G)$ we say that $S$ is \emph{complete} to $T$ if
every vertex of $S$ is adjacent to every vertex of $T$, and we say
that $S$ is \emph{anticomplete} to $T$ if no vertex of $S$ is adjacent
to any vertex of $T$.  For $S\subseteq V(G)$ we denote by $G[S]$ the
induced subgraph of $G$ with vertex-set $S$.  The complement of $G$ is
denoted by $\overline{G}$.

A coloring of a graph $G$ is a partition of $V(G)$ into stable sets.
A \emph{clique cover} is a partition of $V(G)$ into cliques.  Hence of
a coloring of a graph $G$ is a clique cover of $\Gbar$ and vice-versa.

We let $K_n$ denote the complete graph on $n$ vertices.  The graph
$K_3$ is usually called a \emph{triangle}.  The graph $\overline{P_5}$
is usually called the \emph{house}.  Note that the bull is a
self-complementary graph.  Hence the problem of coloring a
($P_5$,bull)-free graph is equivalent to the problem of finding a
clique cover of a (bull,house)-free graph.  We find it more convenient
to adopt this latter point of view.  So our main result can be
reformulated as follows.

\begin{theorem}\label{thm:main2}
There is a polynomial time algorithm that finds a minimum-size clique
cover in any (bull, house)-free graph.
\end{theorem}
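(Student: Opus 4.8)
The plan is to combine a decomposition by homogeneous sets with a structural analysis of the graphs that survive that decomposition, invoking two known algorithmic results as black boxes: the polynomial-time computation of a minimum weighted clique cover in a perfect graph (Gr\"otschel, Lov\'asz and Schrijver), and the polynomial-time $k$-coloring algorithm for $P_5$-free graphs of Ho\`ang et al.~\cite{HKLSS} (recall that $\Gbar$ is $P_5$-free whenever $G$ is house-free). All of these, together with the recombination steps below, produce a cover explicitly, not just its size.

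First I would fix the right recursive quantity. The clique-cover number is not additive along the modular decomposition when a module fails to be a clique --- already for $C_5$ with one vertex duplicated into a stable pair, the naive combination of the parts overcounts --- so instead I would solve the \emph{weighted clique cover problem}: given $G$ and weights $w:V(G)\to\mathbb Z_{\ge 0}$, find a family of cliques of minimum total weight that covers each vertex $v$ at least $w(v)$ times (equivalently, the ordinary clique cover problem on the graph obtained from $G$ by expanding each $v$ into a clique of size $w(v)$). This quantity is self-reducible along the modular decomposition tree: at a disjoint-union node the values add, at a join node they combine by ``take the maximum'', and at a prime node one must solve the weighted problem on the prime quotient, with the optimal values of the children as the new weights. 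Since the quotient is an induced subgraph of $G$, it is again (bull, house)-free, so everything reduces to solving weighted clique cover on a \emph{prime} (bull, house)-free graph. (A clique-cutset decomposition could be used as further preprocessing, but it is not needed.)

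The core is then a structure theorem for prime (bull, house)-free graphs $H$: either $H$ is perfect, or it is imperfect, and then --- since a house-free graph contains no odd antihole of length at least $7$ (such an antihole contains an induced house) --- the Strong Perfect Graph Theorem forces $H$ to contain an odd hole $C$. Bull-freeness is very restrictive around $C$: a short case analysis (for instance, a vertex with exactly two neighbours on $C$, these being consecutive, already yields an induced bull) shows that a single vertex may attach to $C$ only in a few tightly prescribed ways, and bootstrapping this I would show that a prime imperfect (bull, house)-free graph is essentially built around one hole --- it is either triangle-free, or it consists of a hole together with a limited, locally bounded amount of extra structure attached to short arcs of that hole. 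I expect this classification to be the main obstacle and the bulk of the work.

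Given the structure theorem, the algorithmic endgame is routine. If $H$ is perfect, weighted clique cover is polynomial by the Gr\"otschel--Lov\'asz--Schrijver method. If $H$ is triangle-free, every clique is a single vertex or an edge, so weighted clique cover reduces to a capacitated edge-cover problem that is solvable in polynomial time by matching techniques. In the remaining case, $H$ is a hole with locally bounded extra structure, and the weighted clique cover is computed directly by dynamic programming around the hole, the state recording how far each maximal uncovered arc extends; since the local gadgets touch only bounded arcs, the state space stays polynomial. Feeding these values back through the modular-decomposition recursion solves the weighted problem, and hence Theorem~\ref{thm:main2}. As a consistency check with the reformulated Theorem~\ref{thm:main}, note that whenever $\theta(G)$ happens to be bounded by a constant, $\Gbar$ is $P_5$-free with $\chi(\Gbar)=\theta(G)$ bounded, so running the algorithm of \cite{HKLSS} for $k=1,2,\dots$ up to that bound returns the same value.
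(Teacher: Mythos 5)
Your overall architecture --- reduce along the modular decomposition to prime (bull, house)-free graphs, classify those, and solve each class directly --- has the same skeleton as the paper's, and the recursion itself (sum at union nodes, maximum at join nodes, a blow-up of the quotient at prime nodes) is sound; the paper implements exactly this by replacing each non-stable quasi-maximal module $M$ by a stable set of $cc(G[M])$ vertices and re-expanding afterwards. (A small slip: your problem ``cover each $v$ at least $w(v)$ times'' corresponds to expanding $v$ into a \emph{stable set} of size $w(v)$, not a clique --- if the copies form a clique, a single covering clique takes care of all of them and the weights become irrelevant.) The genuine gap is at the center: the structure theorem for prime (bull, house)-free graphs is not proved, only conjectured, and you yourself flag it as ``the main obstacle and the bulk of the work.'' That theorem is precisely the paper's Theorem~\ref{thm:hbf}: a (house, bull)-free graph whose proper homogeneous sets are all stable, and which contains a $P_5$ or a $C_5$, is triangle-free. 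Its proof occupies most of the paper (a careful analysis, claims (1)--(5), of how outside vertices can attach to an inclusionwise maximal blown-up $P_5$/$C_5$). Your sketch --- one observation about a vertex with two consecutive neighbours on an odd hole, followed by ``bootstrapping this I would show\ldots'' --- does not supply that argument, and the classification you guess at is both weaker and vaguer than what is true: there is no surviving case of ``a hole plus locally bounded gadgets,'' so the dynamic-programming endgame you describe addresses a case that never arises, while the case that does arise (prime and imperfect, hence triangle-free) is exactly the statement you would still have to prove.

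On tools, the two routes diverge: the paper avoids both the Strong Perfect Graph Theorem and the Gr\"otschel--Lov\'asz--Schrijver machinery, using instead the elementary fact of Chv\'atal, Ho\`ang, Mahadev and de Werra that greedy colouring in non-increasing degree order is optimal on $(C_5,P_5,\mbox{house})$-free graphs, plus maximum matching for the triangle-free case. Your black boxes are legitimate and would cover the perfect prime case (which the paper handles via $C_5$- and $P_5$-freeness), but they are much heavier and do not substitute for the missing structural analysis. If you supplied the classification ``prime and imperfect implies triangle-free,'' your algorithm would go through; without it, the proof is incomplete at precisely the point where all the difficulty lies.
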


In a graph $G$, two vertices are called \emph{duplicates} if they 
have the same neighbors (in particular, they are not adjacent). 
Given a graph $G$ and a vertex $u$ of $G$, \emph{duplicating} $u$ 
means creating a new vertex $u'$ with the same neighbors as $u$.

A \emph{homogeneous set} in a graph $G$ is a set $S\subseteq V(G)$
such that every vertex in $V(G)\setminus S$ is either complete or
anticomplete to $S$.  A homogeneous set is \emph{proper} if it
contains at least two vertices and is different from $V(G)$.  A
\emph{module} is a homogeneous set $M$ such that every homogeneous set
$S$ satisfies either $S\subseteq M$ or $M\subseteq S$ or $S\cap
M=\emptyset$.  In particular $V(G)$ is a module and every one-vertex
set is a module.  It follows from their definition that the modules
form a ``nested'' family, so their inclusion relation can be
repesented by a tree, and any graph $G$ has at most $2|V(G)|-1$
modules.  The modules of a graph $G$ can be produced by an algorithm
of linear (i.e., $O(|V(G)|+|E(G)|)$) time complexity
\cite{CH,CHPT,MS}.  A module $M$ is \emph{quasi-maximal} if the only
module that contains $M$ strictly is $V(G)$.  So the quasi-maximal
modules form a partition of $V(G)$.  We will use the following results
from \cite{Gallai}.
\begin{lemma}[Gallai \cite{Gallai}]\label{lem:hqm}
Let $G$ be any graph. \\
--- If $M$ is any module of $G$, then the modules of $G[M]$ are the 
modules of $G$ that are contained in $M$. \\
--- If $G$ (resp.~$\overline{G}$) is not connected, then the
quasi-maximal modules of $G$ are the vertex-sets of the components of
$G$ (resp.~of $\overline{G}$). \\
--- If $G$ and $\overline{G}$ are connected, then every proper
homogeneous set of $G$ is included in a quasi-maximal module of $G$.
\end{lemma}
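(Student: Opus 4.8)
The plan is to establish the three assertions one by one, after recording two elementary facts about homogeneous sets. Fact (i): if $A$ and $B$ are homogeneous sets with $A\cap B\neq\emptyset$, then $A\cup B$ is homogeneous; indeed, for $v\notin A\cup B$ and $w\in A\cap B$, whether $vw$ is an edge determines, via homogeneity of $A$ and of $B$, whether $v$ is complete or anticomplete to each of $A$ and $B$, hence to $A\cup B$. Fact (ii): if $M$ is a module of $G$, then the homogeneous sets of $G[M]$ are exactly the homogeneous sets of $G$ that are contained in $M$; here a vertex of $V(G)\setminus M$ is complete or anticomplete to all of $M$ since $M$ is a homogeneous set, and a vertex of $M$ is controlled by homogeneity in $G[M]$.

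For the first assertion, fix a module $M$ of $G$ and use that a module, by definition, compares with every homogeneous set. If $S\subseteq M$ is a module of $G$, then $S$ is homogeneous in $G[M]$, and any homogeneous set $T$ of $G[M]$ is homogeneous in $G$ by (ii), so $S$ compares with $T$; hence $S$ is a module of $G[M]$. Conversely, if $S$ is a module of $G[M]$, then $S$ is homogeneous in $G$; given a homogeneous set $T$ of $G$, either $T\subseteq M$, so $T$ is homogeneous in $G[M]$ and compares with $S$, or $T\not\subseteq M$, so (since $M$ is a module) $T$ is disjoint from $M$ or contains $M$, and in either case $T$ compares with $S\subseteq M$. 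Hence $S$ is a module of $G$.

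For the second assertion, since $G$ and $\overline{G}$ have the same homogeneous sets, and therefore the same modules and quasi-maximal modules, I may assume $G$ is disconnected, with components $C_1,\dots,C_t$ ($t\geq2$). Each $C_i$ is homogeneous, being anticomplete to $V(G)\setminus C_i$. If a homogeneous set $S$ meets $C_i$ but is not contained in it, then choosing $y\in S$ in another component shows that every vertex of $C_i\setminus S$ is anticomplete to $S$, so $C_i\setminus S$ disconnects $C_i$ unless $C_i\subseteq S$; thus $C_i$ compares with every homogeneous set and is a module. If some module $N$ satisfied $C_i\subsetneq N\subsetneq V(G)$, then every component met by $N$ would be contained in $N$ (such a component is a module comparing with $N$ that cannot contain $N$), so $N$ would contain some $C_j$ with $j\neq i$ and miss some $C_k$; but then $C_j\cup C_k$ is homogeneous and fails to compare with $N$, a contradiction. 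So each $C_i$ is a quasi-maximal module, and since the quasi-maximal modules partition $V(G)$, the sets $C_1,\dots,C_t$ are precisely the quasi-maximal modules.

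The third assertion is the main point, and everything rests on the claim that when $G$ and $\overline{G}$ are connected, every maximal proper homogeneous set $M$ is a module. Granting this, a given proper homogeneous set $S$ extends to a maximal proper homogeneous set $M$ (the proper homogeneous sets containing $S$ form a nonempty finite family), $M$ is then a module distinct from $V(G)$, and the modules containing $M$ form a chain (any two contain $M$, hence are comparable), so the largest one other than $V(G)$ is a quasi-maximal module containing $S$. To prove the claim, suppose some homogeneous set $H$ overlaps $M$. By (i), $M\cup H$ is homogeneous and strictly contains $M$, so $M\cup H=V(G)$ by maximality. Put $A=M\setminus H$, $B=H\setminus M$, $C=M\cap H$; these are nonempty and partition $V(G)$, with $M=A\cup C$, $H=B\cup C$, $A\cap H=\emptyset$ and $B\cap M=\emptyset$. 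Homogeneity of $H$ gives that each vertex of $A$ is complete or anticomplete to $H$, and comparing two vertices of $A$ through a common vertex of $B$ (using homogeneity of $M$) shows this is uniform over $A$; symmetrically, $B$ is uniformly complete or anticomplete to $M$. Matching the two dichotomies on the adjacencies between $A$ and $B$ leaves exactly two configurations: either $C$ is complete to $V(G)\setminus C$, making $C$ a union of components of $\overline{G}$, or $C$ is anticomplete to $V(G)\setminus C$, making $C$ a union of components of $G$; both are impossible because $G$ and $\overline{G}$ are connected while $C$ is nonempty and a proper subset of $V(G)$. I expect this final case analysis --- obtaining the uniformity and then checking that the surviving configurations both break connectivity --- to be the main obstacle; the first two assertions are routine once facts (i) and (ii) are available.
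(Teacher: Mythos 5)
The paper does not prove this lemma; it is imported from Gallai's paper as a black box, so there is no in-paper proof to compare against. Your argument is a correct, self-contained proof of all three assertions, and it follows the standard route: facts (i) and (ii) give the first two parts routinely, and the heart of the matter is your claim that under connectivity of $G$ and $\overline{G}$ a maximal proper homogeneous set $M$ cannot be overlapped, which you establish correctly --- the overlap forces $M\cup H=V(G)$, the uniformity of the $A$--$H$ and $B$--$M$ dichotomies couples the two cases via the $A$--$B$ adjacencies, and each surviving configuration makes $C=M\cap H$ either anticomplete or complete to its complement, contradicting connectivity of $G$ or of $\overline{G}$. The passage from that claim to the stated conclusion (extending $S$ to a maximal proper homogeneous set and taking the top non-$V(G)$ element of the chain of modules above it) is also sound. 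No gaps.
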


Our algorithm will be based on the following structural result.

\begin{theorem}\label{thm:hbf}
Let $G$ be any (house, bull)-free graph. Then either: \\
--- $G$ has a proper homogeneous set that is not a stable set, or \\
--- $G$ is $C_5$-free and $P_5$-free, or \\
--- $G$ is triangle-free.
\end{theorem}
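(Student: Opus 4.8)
The plan is to establish the contrapositive in the following form: if $G$ is (house, bull)-free, contains a triangle, and contains an induced $C_5$ or an induced $P_5$, then $G$ has a proper homogeneous set that is not stable.

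\emph{Step 1: reduction to a graph with no proper homogeneous set.} Suppose toward a contradiction that $G$ satisfies the three hypotheses but has no proper homogeneous set that is not stable. If $G$ is disconnected, the vertex set of the component carrying a triangle is such a set; so $G$ is connected. If $\overline{G}$ is disconnected then $G$ is a join: each ``side'' of the join is a homogeneous set, so by assumption each side with at least two vertices is stable, whence $G$ is complete multipartite; but a complete multipartite graph with a triangle has at least three parts, and the union of two parts is then a proper non-stable homogeneous set. So $\overline{G}$ is connected too. By Lemma~\ref{lem:hqm} every proper homogeneous set of $G$ lies in a quasi-maximal module, and the quasi-maximal modules partition $V(G)$; by assumption each quasi-maximal module with at least two vertices is stable, i.e.\ is a set of pairwise duplicates. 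Deleting all but one vertex of each such module yields an induced subgraph that is again (house, bull)-free and still contains a triangle and an induced $C_5$ or $P_5$ --- no two vertices of a $C_5$ or a $P_5$ are duplicates, and any vertex of the triangle, the $C_5$, or the $P_5$ may be moved onto the retained representative of its module. Iterating, we arrive at a (house, bull)-free graph that has no proper homogeneous set at all, yet contains a triangle and an induced $C_5$ or $P_5$. So it suffices to show that no such graph exists.

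\emph{Step 2: an induced $C_5$.} Assume $G$ has no proper homogeneous set, is (house, bull)-free, has a triangle, and has an induced $C_5$ on $V_0=\{c_1,\dots,c_5\}$ with $c_ic_{i+1}\in E$ (indices mod $5$). A routine check of the five-vertex subsets of $V_0\cup\{v\}$ shows that for every $v\notin V_0$ the set $N(v)\cap V_0$ is one of: $\emptyset$; a single $c_i$; $\{c_{i-1},c_{i+1}\}$ (so that $v$ is a ``$V_0$-twin'' of $c_i$); $\{c_{i-1},c_i,c_{i+1}\}$; or all of $V_0$ --- any other attachment induces a bull or a house. Since $G$ has no proper homogeneous set, $V_0$ itself is not one ($V_0\ne V(G)$ because $G$ has a triangle), so some vertex has a ``mixed'' attachment to $V_0$. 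Starting from such a vertex, the triangle, and house/bull-freeness, one determines the edges among the attachment classes and then exhibits a proper homogeneous set --- typically a pair of duplicates, or the set of all vertices with a prescribed attachment both to $V_0$ and to one further fixed vertex --- a contradiction. The triangle is exactly what excludes the configurations (for instance, subgraphs of a blown-up $C_5$, which are triangle-free) that would otherwise survive.

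\emph{Step 3: an induced $P_5$, and the main obstacle.} If instead $G$ has no proper homogeneous set, is (house, bull)-free, has a triangle, and has an induced $P_5$, we may by Step~2 also assume $G$ is $C_5$-free; this drastically shortens the list of possible attachments of an outside vertex to a fixed induced $P_5$, the triangle can meet the path in only a few ways, and one again extracts a proper homogeneous set, a contradiction. I expect the hard part to be exactly this extraction in Steps~2 and~3: once a triangle meets the induced $C_5$ (or $P_5$) through the ``twin'' or ``consecutive-triple'' attachment classes, there are several near-miss configurations that must be eliminated one by one before a homogeneous set can be named, and keeping that bookkeeping under control --- while treating the $C_5$ case first so that the $P_5$ analysis may assume $C_5$-freeness, thereby avoiding the circularity that an induced $C_5$ plus one extra vertex often already contains an induced $P_5$ --- is where the real work lies.
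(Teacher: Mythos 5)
Your Step 1 reduction is sound (stable quasi-maximal modules are classes of duplicates, collapsing them preserves house/bull-freeness, the triangle, and the induced $C_5$ or $P_5$, and by Gallai the quotient is prime), and your classification of how a single outside vertex can attach to an induced $C_5$ is correct. But the proof stops exactly where the theorem begins. Steps 2 and 3 say that ``one determines the edges among the attachment classes and then exhibits a proper homogeneous set''; that sentence is the entire content of the result, and you acknowledge yourself that this is where the real work lies. Nothing in the proposal explains how the contradiction is actually reached. Two concrete obstacles that your outline does not touch: (a) vertices complete to all of $V_0$ are permitted by your attachment classification, so they cannot be eliminated locally --- the paper needs a separate argument (its claim that $B=\emptyset$) which runs a shortest path from the $C_5$/$P_5$ out of the component of $G\setminus B$ and finds a bull at its far end; and (b) the triangle need not meet $V_0$ or its attachment classes at all, so ``the triangle meets the $C_5$ through the twin or consecutive-triple classes'' is not a case analysis you are entitled to --- the paper again has to walk a shortest path from the blown-up structure to the triangle and extract a bull or house near the triangle using a non-homogeneous neighborhood component.

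It is also worth noting where the paper's machinery differs from your plan, because the difference is what makes the argument closable. Rather than working with a single induced $C_5$ or $P_5$, the paper takes five sets $A_1,\dots,A_5$ forming a blown-up $C_5$ or $P_5$ chosen \emph{inclusionwise maximal}; the maximality is what converts ``$v$ is complete to $A_{i+1}$'' into a contradiction (the sets $A_1,\dots,A_i\cup\{v\},\dots,A_5$ would be larger), and this is the engine behind the claim that no outside vertex sees two consecutive classes. It also treats the $C_5$ and $P_5$ cases uniformly ($A_5$ complete or anticomplete to $A_1$), avoiding your two-stage Step 2/Step 3 split. Without the maximal blown-up structure, the shortest-path arguments, and the explicit bull/house configurations, the proposal is a plausible plan rather than a proof.
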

\begin{proof}
Let $G$ be a (house, bull)-free graph.  Suppose that $G$ does not
satisfy the first or the second item of the theorem, and let us prove
that it satisfies the third.  Hence $G$ contains a $P_5$ or a $C_5$.
So there exist five non-empty and pairwise disjoint subsets $A_1,
\ldots, A_5$ of $V(G)$ such that the following properties hold, with
subscripts modulo~$5$:
\begin{itemize}
\item
For each $i\in\{1,2,3,4\}$, $A_i$ is complete to $A_{i+1}$.
\item
For each $i\in\{1,2,3,4,5\}$, $A_i$ is anticomplete to $A_{i+2}$.
\item
$A_5$ is either complete or anticomplete to $A_1$. 
\end{itemize}
Note that if $A_5$ is complete to $A_1$ the five sets play symmetric
roles.  Let $A=A_1\cup\cdots\cup A_5$.  We choose these sets so that
$A$ is inclusionwise maximal.  Let $B$ be the set of vertices of
$V(G)\setminus A$ that are complete to $A$.  We first claim that:
\begin{equation}\label{no4}
\longbox{For any vertex $v\in V(G)\setminus (A\cup B)$ and any
$i\in\{1,\ldots, 5\}$, $v$ is anticomplete to at least one of $A_i$,
$A_{i+1}$, $A_{i+2}$, $A_{i+3}$.}
\end{equation}
Proof: For each $i\in\{1,...,5\}$, let $a_i$ be a neighbor of $v$ in
$A_i$ (if any) and let $z_i$ be a non-neighbor of $v$ in $A_i$ (if
any).  Suppose that $v$ has neighbors in four sets $A_i$, $A_{i+1}$,
$A_{i+2}$, $A_{i+3}$.  Up to symmetry we may assume that
$i\in\{1,3,4\}$.  If $i=1$, then $v$ is complete to $A_5$, for
otherwise $\{a_1,v,a_3,a_4,z_5\}$ induces either a house or a bull
(depending on the adjacency between $a_1$ and $z_5$).  If $i=3$, then
$v$ is complete to $A_2$, for otherwise $\{a_1,z_2,a_3,a_4,v\}$
induces a house.  If $i=4$, then $v$ is complete to $A_3$, for
otherwise $\{a_1,a_2,z_3,a_4,v\}$ induces a house.  In all cases $v$
is complete to $A_{i-1}$, so $v$ has neighbors in all five sets.
Repeating this argument with each $i$ we obtain that $v\in B$, a
contradiction.  Thus (\ref{no4}) holds.

\medskip

Now we claim that:
\begin{equation}\label{no2}
\longbox{For any vertex $v\in V(G)\setminus (A\cup B)$ and any
$i\in\{1,\ldots, 4\}$, $v$ is anticomplete to at least one of $A_i$
and $A_{i+1}$. Also, if $A_5$ is complete to $A_1$, then $v$ is 
anticomplete to one of $A_1,A_5$.}    
\end{equation}
Proof: For each $i\in\{1,...,5\}$, let $a_i$ be a neighbor of $v$ in
$A_i$ (if any) and let $z_i$ be a non-neighbor of $v$ in $A_i$ (if
any).  Suppose that $v$ has neighbors in two consecutive sets $A_i$
and $A_{i+1}$.  \\
First suppose that $A_5$ is complete to $A_1$.  Up to symmetry, we may
assume that $i=1$.  Then $v$ is complete to $A_5$ or to $A_3$, for
otherwise $\{z_5, a_1,v,a_2, z_3\}$ induces a bull.  By symmetry we
may assume that $v$ is complete to $A_3$; and it follows from
(\ref{no4}) that $v$ has no neighbor in $A_5\cup A_4$.  Moreover $v$
is complete to $A_1$, for otherwise $\{z_1,a_2,v,a_3,z_4\}$ induces a
bull.  But now the sets $A_1, A_2\cup\{v\},A_3,A_4,A_5$ contradict the
maximality of $A$.  \\
Therefore we may assume that $A_5$ is anticomplete to $A_1$.  Up to
symmetry we have $i\in\{1,2\}$.  Suppose that $i=1$.  Suppose that $v$
has a non-neighbor $z_3\in A_3$.  Then $v$ is anticomplete to $A_4$,
for otherwise $\{a_1,a_2,z_3,a_4,v\}$ induces a house; and $v$ is
anticomplete to $A_5$, for otherwise $\{z_3,a_2,a_1,v,a_5\}$ induces a
bull; and $v$ is complete to $A_2$, for otherwise either
$\{v,a_1,z_2,z_3,a_2\}$ induces a house (if $a_2z_2\notin E(G)$) or
$\{v,a_2,z_2,z_3,z_4\}$ induces a bull (if $a_2z_2\in E(G)$).  But now
the sets $A_1\cup\{v\}, A_2,A_3,A_4,A_5$ contradict the maximality of
$A$.  Hence $v$ is complete to $A_3$.  By (\ref{no4}), $v$ has no
neighbor in $A_4\cup A_5$.  Then $v$ is complete to $A_1$, for
otherwise $\{z_1,a_2,v,a_3,z_4\}$ induces a bull.  But now the sets
$A_1,A_2\cup\{v\}, A_3,A_4,A_5$ contradict the maximality of $A$.
Finally suppose that $i=2$.  By the preceding point (the case $i=1$)
we may assume that $v$ is anticomplete to $A_1$.  Then $v$ is complete
to $A_4$, for otherwise $\{z_1,a_2,v,a_3,z_4\}$ induces a bull.  By
(\ref{no4}), $v$ is anticomplete to $A_5$.  By symmetry, $v$ is
complete to $A_2$.  But now the sets $A_1,A_2, A_3\cup\{v\},A_4,A_5$
contradict the maximality of $A$.  Thus (\ref{no2}) holds.

\medskip

Now we claim that:
\begin{equation}\label{bem}
\longbox{$B=\emptyset.$}
\end{equation}
Proof: Suppose that $B\neq\emptyset$.  Let $H$ be the component of
$G\setminus B$ that contains~$A$.  By the hypothesis, $V(H)$ is not a
proper homogeneous set, which implies that there exist non-adjacent
vertices $b\in B$ and $x\in V(H)$.  By the definition of $H$ there is
a shortest path $p_1$-$\cdots$-$p_k$ in $H$ with $p_1\in A$ and
$p_k=x$, and we choose the pair $b,x$ so as to minimize $k$.  We have
$k\ge 2$ since $x\notin A$.  We can pick vertices $a_i\in A_i$ for
each $i\in\{1,\ldots,5\}$ so that $p_2$ has a neighbor in
$\{a_1,...,a_5\}$.  We choose three vertices $u,v,w\in\{a_1,...,a_5\}$
so that: (i) $uv$ is the only edge in $G[u,v,w]$, and (ii) $u$ is the
only neighbor of $p_2$ among them; indeed we can find $u,v,w$ as
follows.  If $A_5$ is complete to $A_1$, then by~(\ref{no2}) and
symmetry we may assume that $p_2$ is adjacent to $a_1$ and has no
neighbor in $\{a_2,a_4,a_5\}$, and we set $u=a_1$, $v=a_2$, $w=a_4$.
Suppose that $A_5$ is anticomplete to $A_1$.  If $p_2$ is adjacent to
$a_1$ or $a_2$, let $\{u,v\}=\{a_1,a_2\}$, and let $w$ be a
non-neighbor of $p_2$ in $\{a_4,a_5\}$ ($w$ exists by (\ref{no2})).
The case when $p_2$ is adjacent to $a_5$ or $a_4$ is symmetric.
Finally if the only neighbor of $p_2$ in $\{a_1,...,a_5\}$ is $a_3$,
then let $u=a_3$, $v=a_2$ and $w=a_5$.  In either case, we see that
$b$ is adjacent to $p_2$, for otherwise $\{p_2,u,v,b,w\}$ induces a
bull.  So $k\ge 3$.  By the minimality of $k$, the vertices
$p_3,...,p_k$ have no neighbor in $A$, and $b$ is adjacent to each of
$p_2,...,p_{k-1}$.  Then $\{p_k,p_{k-1},p_{k-2},b,w\}$ induces a bull,
a contradiction.  Thus (\ref{bem}) holds.

\medskip

Now we claim that:
\begin{equation}\label{ais}
\longbox{For each $i\in\{1,...,5\}$,  $A_i$ is a stable set.}
\end{equation}
Proof: Suppose, up to symmetry, that $A_i$ is not a stable set for
some $i\in\{1,2,3\}$.  So $G[A_i]$ has a component $H$ of size at
least $2$.  By the hypothesis, $V(H)$ is not a homogeneous set, so
there is a vertex $z\in V(G)\setminus V(H)$ and two vertices $x,y\in
V(H)$ such that $z$ is adjacent to $y$ and not to $x$, and since $H$
is connected we may choose $x$ and $y$ adjacent.  By the definition of
$H$ we have $z\notin A_i$.  Since $z$ is adjacent to $y$ and not to
$x$, we have $z\notin A\cup B$.  Pick any $a'\in A_{i+1}$ and $a''\in
A_{i+2}$.  By (\ref{no2}) and since $z$ has a neighbor in $A_i$, $z$
is not adjacent to $a'$.  Then $\{z,y,x,a',a''\}$ induces a bull or a
house (depending on the adjacency between $z$ and $a''$), a
contradiction.  Thus (\ref{ais}) holds.

\medskip

Now we claim that:
\begin{equation}\label{gtf}
\longbox{$G$ is triangle-free.}
\end{equation}
Proof: Suppose that $T=\{u,v,w\}$ is the vertex-set of a triangle in
$G$.  By~(\ref{ais}), the graph $G[A]$ is triangle-free.  Moreover,
by~(\ref{no2}), no triangle of $G$ has two vertices in $A$.  So $T$
contains at most one vertex from $A$.  Note that $G$ is connected, for
otherwise the vertex-set of the component that contains $A$ would be a
proper homogeneous set and not a stable set.  So there is a shortest
path $P$ from $A$ to $T$.  Let $P=p_1$-$\cdots$-$p_k$, with $p_1\in
A$, $p_2, \ldots, p_k\in V(G)\setminus A$, $p_k=u$, $k\ge 1$, and
$v,w\notin A$.  We choose $T$ so as to minimize $k$.  We can pick
vertices $a_i\in A_i$ for each $i\in\{1,...,5\}$ so that, up to
symmetry $p_1=a_i$ for some $i\in\{1,2,3\}$.  Let $p_0=a_{i+1}$.  Let
$U$ be the set of neighbor of $u$, and let $H$ be the component of
$G[U]$ that contains $v$ and $w$.  Since $V(H)$ is not a homogeneous
set, there are vertices $x,y\in V(H)$ and $z\in V(G)\setminus V(H)$
such that $z$ is adjacent to $y$ and not to $x$, and since $H$ is
connected we may choose such $x$ and $y$ adjacent.  By the definition
of $H$, the vertex $z$ is not adjacent to $u$.  If $x$ is adjacent to
$p_{k-1}$, then either $k=1$ and (\ref{no2}) is violated (because $x$
is adjacent to $p_1$ and $p_0$), or $k\ge 2$ and $\{p_{k-1},p_k,x\}$
is a triangle that contradicts the minimality of $k$.  So $x$ is not
adjacent to $p_{k-1}$, and similarly $y$ is not adjacent to $p_{k-1}$.
But then $\{z,y,x,u,p_{k-1}\}$ induces a bull or a house (depending on
the adjacency between $z$ and $p_{k-1}$), a contradiction.  Thus
(\ref{gtf}) holds.  This completes the proof of the theorem.
\end{proof}

In view of the first item of Theorem~\ref{thm:hbf}, we are interested
in the class of ($C_5,P_5,$ house)-free graphs.  The following result
applies to this class.
\begin{theorem}[\cite{CHMW}]\label{thm:4classes}
The chromatic number of any ($C_5,P_5$, house)-free graph can be
determined in linear time.
\end{theorem}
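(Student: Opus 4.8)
The plan is to reduce, via modular decomposition, to the case of split graphs. First I would note that every $(C_5,P_5,\text{house})$-free graph $G$ is perfect: a hole of length at least $7$ contains an induced $P_5$, and $C_5$ is excluded, so $G$ has no odd hole; dually, an antihole of length at least $7$ is the complement of a hole of length at least $7$ and hence contains an induced $\overline{P_5}=\text{house}$, while $\overline{C_5}=C_5$ is excluded, so $G$ has no odd antihole; by the Strong Perfect Graph Theorem $G$ is perfect. Consequently $\chi(H)=\omega(H)$ for every induced subgraph $H$ of $G$ (and all such $H$ are again in the class), so it suffices to compute $\omega(G)$, maintaining an optimal coloring alongside.

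Next I would use the modular decomposition tree of $G$, computable in linear time by the algorithms already cited. Its internal nodes come in three types, and I would process the tree bottom-up. At a parallel node ($G$ disconnected) $\omega$ is the maximum of the values at the children; at a series node ($\overline{G}$ disconnected) it is their sum; the corresponding colorings are assembled by, respectively, merging color classes and using disjoint color palettes. The crux is a prime node, whose quotient $Q$ is a \emph{prime} $(C_5,P_5,\text{house})$-free graph with a module $M_i$ substituted at each vertex $i$ of $Q$. Since a clique of $G$ projects onto a clique of $Q$, one has $\omega(G)=\max_{C}\sum_{i\in C}\omega(G[M_i])$, the maximum over cliques $C$ of $Q$; so everything reduces to understanding prime graphs in the class.

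The key structural claim I would establish is: \emph{every prime $(C_5,P_5,\text{house})$-free graph $Q$ is a split graph}. By the Foldes--Hammer characterization it suffices to forbid induced $2K_2$ and induced $C_4$ (induced $C_5$ being excluded by hypothesis). Since $Q$ is prime it has no proper homogeneous set; so if $\{a,b,c,d\}$ induced a $C_4$ with edges $ab,bc,cd,da$, then $\{a,c\}$ would not be homogeneous, some vertex $v$ would be adjacent to exactly one of $a,c$, and a short case analysis on the adjacency of $v$ to $b$ and to $d$ produces an induced $P_5$, $C_5$, or house. Likewise, using that $Q$ is connected, a shortest path joining the two edges of a supposed induced $2K_2$, taken together with those two edges, forces an induced $P_5$, $C_5$, or house. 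With $Q$ split, fix a partition $V(Q)=K\cup S$ into a clique $K$ and a stable set $S$; since $S$ is stable, every clique of $Q$ meets $S$ in at most one vertex, so, writing $\omega_i:=\omega(G[M_i])$,
\[
\omega(G)\;=\;\max\Big(\textstyle\sum_{i\in K}\omega_i,\ \max_{s\in S}\big(\omega_s+\sum_{i\in K,\ i\sim s}\omega_i\big)\Big),
\]
which is evaluated in time linear in the size of $Q$. An optimal coloring of the prime node is built from optimal colorings of the $G[M_i]$: give each $M_i$ with $i\in K$ a private block of $\omega_i$ colors, and color each $M_s$ with $s\in S$ using any $\omega_s$ colors avoided by the (at most $\sum_{i\sim s}\omega_i$) colors on its neighbors.

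I expect the main obstacle to be the structural claim that prime $(C_5,P_5,\text{house})$-free graphs are split graphs — in particular the exclusion of induced $2K_2$, where the case analysis around the shortest connecting path is the delicate point. The algorithmic accounting is then routine: the modular decomposition is linear, the work at each node is linear in the local quotient, and the total size of all quotient graphs is $O(|V(G)|+|E(G)|)$ (each quotient edge is charged to a distinct edge of $G$ at the corresponding least common ancestor), so $\chi(G)$ and an optimal coloring are computed in linear time overall.
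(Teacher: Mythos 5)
Your reduction stands or falls with the structural claim that every prime $(C_5,P_5,\text{house})$-free graph is a split graph, and that claim is false. Take the $4$-cycle $a$-$b$-$c$-$d$-$a$ and attach a pendant vertex $v$ to $a$ and a pendant vertex $w$ to $b$, with $v$ and $w$ non-adjacent (equivalently: the path $v$-$a$-$d$-$c$-$b$-$w$ plus the chord $ab$). Checking the six $5$-vertex induced subgraphs, one gets two banners, two trees of maximum degree $3$, and two disconnected graphs, so this graph is $(C_5,P_5,\text{house})$-free; it is prime (every pair of vertices is separated by a third, and the ``closure'' of any pair under adding its separators is the whole vertex set); and it contains the induced $C_4$ on $\{a,b,c,d\}$, so it is not split. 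The failure is already visible inside your own case analysis for $C_4$-freeness: a vertex $v$ adjacent to exactly one of $a,c$ yields a house only when $v$ sees exactly one of $b,d$; if $v$ sees both or neither of $b,d$ you get a $W_4$-minus-spoke or a banner, neither of which is forbidden, and no contradiction follows. (Your $2K_2$ argument as stated is also incomplete --- connectivity alone does not rule out, e.g., the butterfly --- although that particular conclusion happens to survive in the prime case.) Since the prime quotient need not be split, your linear-time evaluation of $\max_{C}\sum_{i\in C}\omega(G[M_i])$ over cliques $C$ of the quotient has no justification, and this is the step that carries the whole algorithm; the perfection argument via the Strong Perfect Graph Theorem and the handling of series and parallel nodes are fine but do not touch this difficulty.

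For comparison, the paper's route (following Chv\'atal, Ho\`ang, Mahadev and de~Werra) is entirely different and much lighter: order the vertices by non-increasing degree and run the greedy coloring algorithm on that order; for $(C_5,P_5,\text{house})$-free graphs this is a perfect order and the greedy coloring is optimal. No modular decomposition and no classification of prime graphs is needed, which is fortunate given that the prime graphs in this class are not as simple as your proposal assumes.
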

For the sake of completeness we can summarize the proof of this
theorem as follows.  Let $v_1,v_2,...,v_n$ be an ordering of the
vertices of $G$ such that their degrees satisfy $d(v_1)\ge d(v_2)
\ge\dots\ge d(v_n)$.  Apply the greedy coloring algorithm on this
ordering.  Then a minimum coloring is produced.  (Actually the theorem
in \cite{CHMW} is proved for a larger class of graphs.)

Note that the class of ($C_5,P_5,$ house)-free graphs is
self-complementary, so Theorem~\ref{thm:4classes} can also be used to
find a minimum clique cover.

\subsection*{The algorithm}

Now we can describe our algorithm. Let $G$ be any (house, bull)-free 
graph for which we want to find a minimal clique cover. Let $cc(G)$ 
denote the size of a minimal clique cover of $G$. 

\bigskip

\noindent(I) Suppose that $G$ is $P_5$-free and $C_5$-free.  Then we
can use Theorem~\ref{thm:4classes}.

\medskip

\noindent(II) Suppose that $G$ is triangle-free.  Then a clique cover
consists of the edges of a matching $M$ plus a one-vertex clique for
each vertex that is not saturated by $M$, so it has size $|V(G)|-|M|$.
Hence determining a minimum clique cover is equivalent to finding a
maximum matching, which can be done in time $O(|V(G)|^3)$ \cite{MV}.

\medskip

\noindent(III) Suppose that $G$ is not connected.  Recall that in this
case the quasi-maximal modules of $G$ are the vertex-sets of its
components, by Lemma~\ref{lem:hqm}.  It suffices to solve the problem
for each component of $G$ and to take the union of the solutions.
Hence $cc(G)$ is the sum of $cc(H)$ over all components $H$ of $G$.

\medskip
 
\noindent(IV) Suppose that $\overline{G}$ is not connected.  Let
$U_1,...,U_p$ ($p\ge 2$) be the vertex-sets of the components of
$\overline{G}$.  Recall that these sets are the quasi-maximal modules
of $G$, by Lemma~\ref{lem:hqm}.  Hence in $G$ any two such sets are
complete to each other.  We can solve the problem recursively for each
induced subgraph $G[U_i]$.  Let $\{Q_i^1,...,Q_i^{c_i}\}$ be a minimum
clique cover of $G[U_i]$ for each $i$, with $c_i=cc(G[U_i])$, and let
$c=\max\{cc(G[U_i])\mid i=1,...,p\}$.  Let $Q^j= Q_1^j\cup\cdots\cup
Q_p^j$ for all $j=1,...,c$.  Then $\{Q^1,...,Q^c\}$ is a clique cover
of $G$, which shows that $cc(G) = \max\{cc(G[U_i])\mid i=1,...,p\}$.

\medskip

\noindent(V) Finally, suppose that $G$ and $\overline{G}$ are
connected, and (by Theorem~\ref{thm:hbf}) that $G$ has a proper
homogeneous set that is not a stable set.  By Lemma~\ref{lem:hqm},
there is a quasi-maximal module $M$ of $G$ that is not a stable set.
We solve the problem recursively on $G[M]$ and obtain a minimum clique
cover $C_M$ of $G[M]$.  In $G$ we replace $M$ with a stable set $S_M$
of size $|C_M|$.  Let $G_M$ be the resulting graph.  We observe that:
\begin{equation}\label{gm}
\longbox{$G_M$ is (house, bull)-free, and $|V(G_M)| < |V(G)|$.}
\end{equation}
Indeed, $G_M$ is obtained from an induced subgraph of $G$ (the graph
$G\setminus (M\setminus x)$ for any $x\in M$) by duplicating vertices.
Duplication cannot create a house or a bull since these two graphs do
not have duplicate vertices.  Moreover, if $|V(G_M)| = |V(G)|$, then
$|M|=|C_M|$, and we know that $M$ is not a stable set, hence
$cc(G[M])<|C_M|$, a contradiction.  Thus (\ref{gm}) holds.

We do the same for every quasi-maximal module of $G$ that is not a
stable set.  Thus we obtain a graph $G'$ where every proper
homogeneous set is a stable set.  By Theorem~\ref{thm:hbf} we can
obtain a minimum clique cover $C'$ of $G'$ by applying steps (I) or
(II).  For each quasi-maximal module $M$ of $G$ that is not a stable
set, we replace the $j$-th vertex of $S_M$ (in the member of $C'$ that
covers this vertex) with the $j$-th member of $C_M$.  Thus we obtain a
minimum clique cover of $G$.

Note that throughout the execution of the algorithm we only recurse on
modules of $G$.  Hence the number of recursive steps is $O(|V(G)|)$.
So the total complexity is $O(|V(G)|^4)$.

\clearpage

\end{document}